     \def\section{\@startsection{section}{1}%
     \z@{.7\linespacing\@plus\linespacing}{.5\linespacing}%
     {\bfseries%\normalfont\scshape
     \centering
     }}
     \def\@secnumfont{\bfseries}
\newcommand{\F}{{\mathcal F}}
\newcommand{\ieq}{\begin{equation}}
\newcommand{\eeq}{\end{equation}}
\newcommand{\ieqa}{\begin{eqnarray}}
\newcommand{\eeqa}{\end{eqnarray}}
\newcommand{\ieqas}{\begin{eqnarray*}}
\newcommand{\eeqas}{\end{eqnarray*}}
\newcommand{\f}{\hat{f}}
\newcommand{\1}{\mathlarger{\mathlarger{\mathbbm{1}}}}
\newtheorem{theorem}{Theorem}[section]
\newtheorem{proposition}[theorem]{Proposition}
\newtheorem{corollary}[theorem]{Corollary}
\theoremstyle{definition}
\newtheorem{definition}[theorem]{Definition}
\theoremstyle{remark}
\numberwithin{equation}{section}
\begin{document}

\title[Anticipating stochastic integrals in financial modeling]{A triple comparison between anticipating stochastic integrals in financial modeling}

\author{Joan C. Bastons}
\address{Joan C. Bastons: Departmento de Matem\'aticas, Universidad Aut\'onoma de Madrid, Madrid, 28049, Spain}
\email{joan.bastons@estudiante.uam.es}

\author[Carlos Escudero]{Carlos Escudero*}
\thanks{* This work has been partially supported by the Government of Spain (Ministry of Economy, Industry, and Competitiveness) through Project MTM2015-72907-EXP}
\address{Carlos Escudero: Departmento de Matem\'aticas, Universidad Aut\'onoma de Madrid, Madrid, 28049, Spain}
\email{carlos.escudero@uam.es}

\subjclass[2010] {60H05, 60H07, 60H10, 60H30, 91G80}

\keywords{Insider trading, Hitsuda-Skorokhod integral, Russo-Vallois forward integral, Ayed-Kuo integral, anticipating stochastic calculus.}

\begin{abstract}
We consider a simplified version of the problem of insider trading in a financial market. We approach it by means of anticipating
stochastic calculus and compare the use of the Hitsuda-Skorokhod, the Ayed-Kuo, and the Russo-Vallois forward integrals within this context.
Our results give some indication that, while the forward integral yields results with a suitable financial meaning, the Hitsuda-Skorokhod and the Ayed-Kuo integrals
do not provide an appropriate formulation of this problem. Further results regarding the use of the Ayed-Kuo integral in this context are
also provided, including the proof of the fact that the expectation of a Russo-Vallois solution is strictly greater than that of an Ayed-Kuo solution.
Finally, we conjecture the explicit solution of an Ayed-Kuo stochastic differential equation that possesses discontinuous
sample paths with finite probability.
\end{abstract}

\maketitle

%\noindent $\clubsuit$ Note to author: Use 2000 Mathematics Subject Classification.

\section{Introduction}

Stochastic differential equations of the sort
\begin{equation}\label{white}
\frac{dx}{dt}= a(x,t) + b(x,t) \, \xi(t),
\end{equation}
where $\xi(t)$ is a ``white noise'', are ubiquitous mathematical models that arise in different sciences~\cite{hl}.
As written, this model is intuitive but imprecise, and it may become a real mathematical model only if a precise
meaning of the white noise process is given. Classically this has been done through the introduction of the It\^o integral
and its corresponding stochastic differential equation
\begin{equation}\label{ito}
dx = a(x,t) \, dt + b(x,t) \, dB_t,
\end{equation}
where $B_t$ is a Brownian motion, or the Stratonovich integral
\begin{equation}\label{str}
dx = a(x,t) \, dt + b(x,t) \circ dB_t,
\end{equation}
or a different stochastic integral that results from choosing a different evaluation point for the integrand~\cite{mmcc}.
The mathematical theory of existence, uniqueness, and regularity for the solutions to stochastic differential equations of It\^{o}
and Stratonovich type has been built and its applications have been explored~\cite{kuo2006,oksendal}.
One of the most evident differences between equations~\eqref{ito} and~\eqref{str} is their diverse dynamical behavior,
which may affect their stability properties~\cite{tcxh}, and this has direct implications in their applications~\cite{hl}.
Therefore choosing an \emph{interpretation of noise} is the key modeling procedure that takes~\eqref{white} into one
of its precise versions such as~\eqref{ito} or~\eqref{str}. Being a crucial step in applications, and given its model-dependent nature,
this question has generated a vast literature~\cite{mmcc}. While the discussion has been focused on stochastic integrals that
possess adapted integrands, there is nothing genuinely different between this case and that of anticipating stochastic calculus.
In this work we explore the role of noise interpretation in the latter setting.

Anticipating stochastic calculus arises naturally in the study of financial markets when privileged information is considered.
Herein we consider two investment possibilities, one risk-free asset, the bond:
\begin{eqnarray}\nonumber
d S_0 &=& \rho \, S_0 \, d t, \\ \nonumber
S_0(0) &=& M_0,
\end{eqnarray}
and a risky asset, the stock:
\begin{eqnarray}\nonumber
d S_1 &=& \mu \, S_1 \, d t + \sigma \, S_1 \, d B_t, \\ \nonumber
S_1(0) &=& M_1,
\end{eqnarray}
where $M_0, M_1, \rho, \mu, \sigma \in \mathbb{R}^+ := ]0,\infty[$ have the following meaning:
\begin{itemize}
\item $M_0$ is the initial wealth to be invested in the bond,
\item $M_1$ is the initial wealth to be invested in the stock,
\item $\rho$ is the interest rate of the bond,
\item $\mu$ is the appreciation rate of the stock,
\item $\sigma$ is the volatility of the stock.
\end{itemize}
The total initial wealth is given by $M = M_0+M_1$ and we employ the assumption $\mu > \rho$ that expresses the higher
expected return of the risky investment. We also assume that the total initial wealth $M$ is fixed and
the investor freely chooses what part of it is invested in each asset, that is, freely chooses the values of $M_0$ and $M_1$.
Obviously, the total time-dependent wealth is
$$
S^{\text{(I)}}(t)=S_0(t)+S_1(t).
$$
We will consider the dynamics restricted to the time interval $[0,T]$ where the future time $T>0$ is fixed.
We can use It\^o calculus to compute the expectation of the total wealth at time $T$, which is
\begin{equation}\nonumber
\mathbb{E}\left[S^{\text{(I)}}(T)\right] = M_0 \, e^{\rho T} + M_1 \, e^{\mu T}.
\end{equation}
The maximization of this expectation clearly leads to the choice
$$
M_0 = 0, \qquad M_1 = M,
$$
which subsequently leads to the maximal expected wealth
\begin{equation}\label{average}
\mathbb{E}\left[S^{\text{(I)}}(T)\right] = M e^{\mu T}.
\end{equation}
This maximization problem can be considered as a simplified version of the Merton portfolio optimization problem~\cite{merton}.
For instance, the present trader is assumed to be risk-neutral rather than risk-averse. The mathematical simplification
allows for the treatment of the generalization that will be discussed in the following section. This strategy would be the one
chosen by a risk-neutral honest trader. If the trader were a dishonest insider with privileged information on the future price of
the stock, and if this information were used initially to take advantage of it, then the selected strategy would be in general different
from this one. The corresponding discussion as well as the pitfalls to be encountered in the mathematical formulation of this problem
will be presented in the following section.

\section{A simplified model of insider trading}

Assume now that our trader is an insider who has some privileged information on the future value of the stock.
Specifically this insider knows at the initial time $t=0$ the value $B_T$, and therefore the value $S(T)$, but
however s/he does not fully trust this information.
To take advantage of this knowledge s/he uses a modulation of this information in the initial condition. In particular we assume the strategy
\begin{subequations}
\begin{eqnarray}\label{s1}
d S_1 &=& \mu \, S_1 \, d t + \sigma \, S_1 \, d B_t \\ \label{s10}
S_1(0) &=& M \left[ 1 + \frac{\sigma B_T -\sigma^2 T/2}{2(\mu - \rho) T} \right],
\end{eqnarray}
\end{subequations}
and
\begin{subequations}
\begin{eqnarray}\label{rode1}
d S_0 &=& \mu \, S_0 \, d t \\ \label{rode2}
S_0(0) &=& M \frac{\sigma^2 T/2 - \sigma B_T}{2(\mu - \rho) T}.
\end{eqnarray}
\end{subequations}
This strategy is linear in $B_T$ for analytical tractability but also as a way to introduce a certain degree of trust of the insider in the privileged
information s/he possesses. Precisely, the strategy imposes
the same initial investment in the bond and the stock if their values at time $T$ are equal for a given common initial investment.
Additionally, it imposes a null investment in the bond whenever the realization of the Brownian motion yields the average result~\eqref{average}.
Finally, note that we allow negative values for the investments, which means the trader is allowed to borrow money.

The problem we have just described, as formulated in~\eqref{s1}-\eqref{s10}, is ill-posed. While problem~\eqref{rode1}-\eqref{rode2}
can be regarded as an ordinary differential equation subject to a random initial condition, the anticipating character of the initial condition~\eqref{s10}
makes the stochastic differential equation~\eqref{s1} ill-defined in the sense of It\^o. Note however that this problem can be well-posed by changing
the notion of stochastic integration from that of It\^o to a different one that admits anticipating integrands.
The following sections analyze three different choices, the Hitsuda-Skorokhod, the Russo-Vallois, and the Ayed-Kuo stochastic integrals,
and compare the corresponding results. While any of these anticipating stochastic integrals guarantees the well-posedness of the problem under study,
the financial consequences of each choice might be very different.
At this moment it is important to remark that this sort of problem has been approached before through the use of the Russo-Vallois forward integral~\cite{bo,noep,do1,do2,do3,leon,nualart}. On one hand, the justification of such a choice has frequently been made based on technical arguments.
On the other hand, a financial justification is given in~\cite{bo} by means of a buy-and-hold strategy.
This work presents a direct comparison of the financial consequences of the use of each of these anticipating stochastic integrals.
We anticipate that the forward integral is the only one among these that presents desirable modeling properties, at least in our limited setting.
We also note that the problem of insider trading can be approached by means of different methods~\cite{jyc,pk},
but semimartingale approaches are less general than the use of anticipating stochastic calculus~\cite{noep}.

\section{The Hitsuda-Skorokhod integral}

The Hitsuda-Skorokhod integral is an anticipating stochastic integral that was introduced by Hitsuda~\cite{hitsuda} and Skorokhod~\cite{skorokhod}
by means of different methods.
The following definition makes use of the Wiener-It\^o chaos expansion; background on this topic can be found for instance in~\cite{noep,hoeuz}.

\begin{definition}
Let $X \in L^2([0,T]\times \Omega)$ be a square integrable stochastic process. By the Wiener-It\^o chaos expansion, $X$ can be decomposed into an orthogonal series
$$X(t,\omega) = \sum_{n=0}^{\infty} I_n(f_{n,t})$$
in $L^2(\Omega)$, where $f_{n,t}\in L^2([0,T]^n)$ are symmetric functions for all non-negative integers $n$. Thus, we write
$$ f_{n,t}(t_1,\ldots,t_n)=f_n(t_1,\ldots,t_n,t),$$
which is a function defined on $[0,T]^{n+1}$ and symmetric with respect to the first $n$ variables.
The symmetrization of $f_n(t_1,\ldots,t_n,t_{n+1})$ is given by
\begin{align*} \label{simetrizacion}
&\f_n(t_1,\ldots,t_{n+1})= \\
&\frac{1}{n+1}\left[f_n(t_1,\ldots,t_{n+1})+ f_n(t_{n+1},t_2,\ldots,t_1)+\ldots+f_n(t_1,\ldots,t_{n+1},t_{n})\right],
\end{align*}
because we only need to take into account the permutations which exchange the last variable with any other one.
Then, the Hitsuda-Skorokhod integral of $X$ is defined by
\begin{equation*}
\int_0^T X(t,\omega) \, \delta B(t) := \sum_{n=0}^{\infty} I_{n+1}(\f_n),
\end{equation*}
provided that the series converges in $L^2(\Omega)$.
\end{definition}

Following the notation in~\cite{noep}, for the Hitsuda-Skorokhod integral we arrive at the initial value problem
\begin{subequations}
\begin{eqnarray}\label{sko1}
\delta S_1 &=& \mu \, S_1 \, d t + \sigma \, S_1 \, \delta B_t \\ \label{sko2}
S_1(0) &=& M \left[ 1 + \frac{\sigma B_T -\sigma^2 T/2}{2(\mu - \rho) T} \right],
\end{eqnarray}
\end{subequations}
for a Hitsuda-Skorokhod stochastic differential equation.
The existence and uni\-que\-ness theory for linear stochastic differential equations of Hitsuda-Skorokhod type, which covers the present case,
can be found in~\cite{lssdes}.

We remind the reader that the total wealth of the insider is still given by
$$
S^{\text{(HS)}}(T)=S_0(t)+S_1(t).
$$

\begin{theorem}\label{mainthhs}
The expected value of the total wealth of the insider at time $t=T$ is
\begin{eqnarray}\nonumber
\mathbb{E}\left[S^{\text{(HS)}}(T)\right]= M \left\{ \frac{\sigma^2}{4(\mu - \rho)} \, e^{\rho T}
+ \left[ 1 - \frac{\sigma^2}{4(\mu - \rho)} \right] e^{\mu T} \right\},
\end{eqnarray}
for model~\eqref{rode1}-\eqref{rode2} and~\eqref{sko1}-\eqref{sko2}.
\end{theorem}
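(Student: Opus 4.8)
The plan is to compute $\mathbb{E}\left[S_0(T)\right]$ and $\mathbb{E}\left[S_1(T)\right]$ separately and add them. The bond part is easy: since \eqref{rode1}--\eqref{rode2} is an ODE with random initial condition, $S_0(T) = S_0(0)\, e^{\mu T} = M\, e^{\mu T} \frac{\sigma^2 T/2 - \sigma B_T}{2(\mu-\rho)T}$, and since $\mathbb{E}[B_T]=0$ we get $\mathbb{E}\left[S_0(T)\right] = M\, e^{\mu T}\frac{\sigma}{4(\mu-\rho)}$. So the real work is the Hitsuda-Skorokhod equation \eqref{sko1}--\eqref{sko2}.

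For the stock, I would first recall (citing \cite{lssdes}) the explicit solution formula for a linear Hitsuda-Skorokhod SDE of the form $\delta S_1 = \mu S_1\,dt + \sigma S_1\,\delta B_t$ with anticipating initial condition $S_1(0)=\eta$. The key structural fact is that the solution is not simply $\eta$ times the geometric Brownian motion exponential; rather, the Wick-type nature of the Skorokhod integral forces a shift: one expects a solution of the form $S_1(t) = \eta\!\left(B_t - \sigma t,\,\cdot\right) \exp\!\left(\sigma B_t - \tfrac{1}{2}\sigma^2 t + \mu t\right)$, i.e. the initial datum gets evaluated with $B_T$ replaced by $B_T - \sigma t$ (this is the manifestation of the ``Wick product = ordinary product after a Girsanov-type shift'' phenomenon, cf.\ the treatment in \cite{noep,lssdes}). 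Concretely, with $\eta = M\left[1 + \frac{\sigma B_T - \sigma^2 T/2}{2(\mu-\rho)T}\right]$, one substitutes $B_T \mapsto B_T - \sigma t$ inside $\eta$. Then I would take expectations at $t=T$: write $S_1(T)$ as a product of the (shifted) linear-in-$B_T$ factor and $\exp(\sigma B_T - \tfrac12\sigma^2 T + \mu T)$, expand the product into two terms, and evaluate each expectation. The term without $B_T$ gives $M e^{\mu T}$ after using $\mathbb{E}[\exp(\sigma B_T - \tfrac12\sigma^2 T)]=1$; the term linear in $(B_T - \sigma^2 T)$ times $\exp(\sigma B_T - \tfrac12\sigma^2 T)$ is handled by a Girsanov change of measure (or equivalently by the identity $\mathbb{E}[B_T\, e^{\sigma B_T - \frac12\sigma^2 T}] = \sigma T$), which produces a term proportional to $\frac{\sigma^2}{4(\mu-\rho)}$ that cancels part of $M e^{\mu T}$. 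Collecting, $\mathbb{E}\left[S_1(T)\right] = M e^{\mu T}\left[1 - \frac{\sigma^2}{4(\mu-\rho)}\right]$.

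Adding $\mathbb{E}\left[S_0(T)\right] + \mathbb{E}\left[S_1(T)\right]$ then yields exactly the claimed formula, with the $e^{\rho T}$ coefficient $\frac{\sigma^2}{4(\mu-\rho)}$ coming entirely from the bond (whose growth rate is the ``wrong'' $\mu$ in the SDE but whose expectation still scales like $e^{\mu T}$; here I should double-check the exponent bookkeeping, since the stated answer has $e^{\rho T}$ in front of the $\frac{\sigma^2}{4(\mu-\rho)}$ term --- so in fact the bond must be governed by rate $\rho$ when taking expectation, or equivalently the cancellation rearranges the exponentials; I would verify this carefully against \eqref{rode1}).

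The main obstacle I expect is getting the explicit Hitsuda-Skorokhod solution formula right, in particular the direction and size of the Wick shift and the fact that it acts on the anticipating part $B_T$ of the initial condition; once the representation $S_1(t) = M\left[1 + \frac{\sigma(B_T - \sigma t) - \sigma^2 T/2}{2(\mu-\rho)T}\right]e^{\sigma B_t - \frac12\sigma^2 t + \mu t}$ is in hand, the remaining expectation computations are routine Gaussian integrals (or one-line Girsanov arguments). A secondary care point is the consistent treatment of the two growth rates $\rho$ and $\mu$ appearing in \eqref{rode1} versus \eqref{rode2} and making sure the final exponents match the statement.
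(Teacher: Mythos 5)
Your proposal is essentially correct, and it reaches the right answer by a slightly different route than the paper. The paper solves \eqref{sko1}--\eqref{sko2} in Wick form, $S_1(t)=\eta\diamond\exp[(\mu-\sigma^2/2)t+\sigma B_t]$, and then gets $\mathbb{E}[S_1(T)]$ in one line from the factorization property $\mathbb{E}[X\diamond Y]=\mathbb{E}[X]\,\mathbb{E}[Y]$, which never requires unwrapping the Wick product. You instead pass directly to the ordinary-product representation with the Girsanov shift $B_T\mapsto B_T-\sigma t$ in the initial datum; this is exactly Buckdahn's formula \eqref{buckdahn2}, which the paper itself invokes later, and it coincides with \eqref{explicitak} via Proposition~\ref{prophssol}. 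Your shift has the correct sign and magnitude, and your subsequent Gaussian computation (using $\mathbb{E}[e^{\sigma B_T-\sigma^2T/2}]=1$ and $\mathbb{E}[B_T e^{\sigma B_T-\sigma^2T/2}]=\sigma T$) does give $\mathbb{E}[S_1(T)]=Me^{\mu T}\bigl[1-\tfrac{\sigma^2}{4(\mu-\rho)}\bigr]$; this is in fact the computation the paper carries out for the Ayed--Kuo case. What the paper's route buys is that it avoids justifying the shifted representation; what yours buys is that it makes the Hitsuda--Skorokhod/Ayed--Kuo coincidence transparent.

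Two loose ends you should tie up. First, the bond: you leave the growth rate unresolved, computing $\mathbb{E}[S_0(T)]$ with $e^{\mu T}$ and then hedging. The displayed equation \eqref{rode1} does read $dS_0=\mu S_0\,dt$, but this is plainly a typo for $dS_0=\rho S_0\,dt$ (the bond is the risk-free asset with interest rate $\rho$ as set up in the introduction), and the theorem's $e^{\rho T}$ term comes entirely from the bond; the paper's proof uses $e^{\rho T}$ without comment. You should commit to $\rho$ rather than leave it as a ``care point.'' Second, a small slip: $\mathbb{E}\bigl[\tfrac{\sigma^2T/2-\sigma B_T}{2(\mu-\rho)T}\bigr]=\tfrac{\sigma^2}{4(\mu-\rho)}$, not $\tfrac{\sigma}{4(\mu-\rho)}$.
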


\begin{proof}
Using Malliavin calculus techniques~\cite{noep} it is possible to solve problem~\eqref{sko1}-\eqref{sko2} explicitly to find
$$
S_1(t)= M \left[ 1 + \frac{\sigma B_T -\sigma^2 T/2}{2(\mu - \rho) T} \right] \diamond
\exp \left[ \left( \mu -\frac{\sigma^2}{2} \right) t + \sigma B_t \right],
$$
where $\diamond$ denotes the Wick product~\cite{noep}.
Now, using the factorization property of the expectation of a Wick product of random variables,
we find for the expected wealth at the terminal time:
\begin{eqnarray}\nonumber
\mathbb{E}\left[S^{\text{(HS)}}(T)\right] &=& \mathbb{E}[S_0(T)] + \mathbb{E}[S_1(T)] \\ \nonumber
&=& \mathbb{E}\left[ M \frac{\sigma^2 T/2 -\sigma B_T}{2(\mu - \rho) T} \right] e^{\rho T} + \\ \nonumber
& &  \mathbb{E} \left\{ M \left[ 1 + \frac{\sigma B_T -\sigma^2 T/2}{2(\mu - \rho) T} \right] \diamond
\exp \left[ \left( \mu -\frac{\sigma^2}{2} \right) T + \sigma B_T \right] \right\} \\ \nonumber
&=& M \frac{\sigma^2 T/2 -\sigma \, \mathbb{E}(B_T)}{2(\mu - \rho) T} \, e^{\rho T} + \\ \nonumber
& &  \mathbb{E} \left\{ M \left[ 1 + \frac{\sigma B_T -\sigma^2 T/2}{2(\mu - \rho) T} \right] \right\}
\mathbb{E} \left\{ \exp \left[ \left( \mu -\frac{\sigma^2}{2} \right) T + \sigma B_T \right] \right\} \\ \nonumber
&=& M \frac{\sigma^2}{4(\mu - \rho)} \, e^{\rho T}
+ M \left[ 1 + \frac{\sigma \mathbb{E}(B_T) -\sigma^2 T/2}{2(\mu - \rho) T} \right] e^{\mu T} \\ \nonumber
&=& M \left\{ \frac{\sigma^2}{4(\mu - \rho)} \, e^{\rho T}
+ \left[ 1 - \frac{\sigma^2}{4(\mu - \rho)} \right] e^{\mu T} \right\},
\end{eqnarray}
where we have also used that $B_T \sim \mathcal{N}(0,T)$.
\end{proof}

\begin{corollary}
The expected value of the total wealth of the insider at time $t=T$ is strictly smaller than that of the honest trader, i.~e.
$$
\mathbb{E}\left[S^{\text{(HS)}}(T)\right] < \mathbb{E}\left[S^{\text{(I)}}(T)\right].
$$
\end{corollary}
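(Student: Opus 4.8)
The plan is to compare the explicit expression for $\mathbb{E}\left[S^{\text{(HS)}}(T)\right]$ obtained in Theorem~\ref{mainthhs} with the maximal expected wealth $\mathbb{E}\left[S^{\text{(I)}}(T)\right] = M e^{\mu T}$ recorded in~\eqref{average}. Writing $\lambda := \dfrac{\sigma^2}{4(\mu-\rho)}$, which is strictly positive by the standing assumptions $\sigma>0$ and $\mu>\rho$, the claim of Theorem~\ref{mainthhs} reads $\mathbb{E}\left[S^{\text{(HS)}}(T)\right] = M\left[\lambda\, e^{\rho T} + (1-\lambda)\, e^{\mu T}\right]$. The inequality to be proved is therefore equivalent to $\lambda\, e^{\rho T} + (1-\lambda)\, e^{\mu T} < e^{\mu T}$, i.e., after cancelling $e^{\mu T}$ and rearranging, to $\lambda\, e^{\rho T} < \lambda\, e^{\mu T}$.

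The key step is then immediate: since $\lambda > 0$ we may divide by $\lambda$, reducing the claim to $e^{\rho T} < e^{\mu T}$, which holds because $t \mapsto e^{t}$ is strictly increasing, $T > 0$, and $\rho < \mu$. One writes this chain of equivalences carefully in the forward direction: starting from $e^{\rho T} < e^{\mu T}$, multiply by $\lambda>0$, add $(1-\lambda)e^{\mu T}$ to both sides, and multiply by $M>0$ to arrive at $\mathbb{E}\left[S^{\text{(HS)}}(T)\right] < M e^{\mu T} = \mathbb{E}\left[S^{\text{(I)}}(T)\right]$.

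There is essentially no obstacle here; the corollary is a one-line consequence of Theorem~\ref{mainthhs} together with the hypotheses $M,\sigma>0$ and $\mu>\rho>0$. The only point deserving a word of care is to make sure the coefficient $\lambda$ of $e^{\rho T}$ is genuinely positive (so that the inequality is strict and not merely non-strict), which is exactly where the hypothesis $\mu>\rho$ — rather than $\mu\ge\rho$ — is used. The financial reading, worth stating in a sentence, is that any positive hedging weight placed on the lower-yield bond strictly lowers the expected terminal wealth relative to the all-stock strategy, so the Hitsuda-Skorokhod formulation cannot capture the informational advantage the insider is supposed to enjoy.
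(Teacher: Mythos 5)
Your proof is correct and is exactly the argument the paper intends: the corollary is stated without proof as an immediate consequence of Theorem~\ref{mainthhs}, and your computation $\mathbb{E}[S^{\text{(HS)}}(T)] - M e^{\mu T} = M\,\frac{\sigma^2}{4(\mu-\rho)}\left(e^{\rho T}-e^{\mu T}\right) < 0$, using $\sigma>0$, $\mu>\rho$, and $T>0$, is the one-line verification the authors leave to the reader.
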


\section{The Russo-Vallois integral}

The Russo-Vallois forward integral was introduced in~\cite{russovallois}. As well as the Hitsuda-Skorokhod integral,
it generalizes the It\^o integral and allows to integrate anticipating processes, but in general produces different results from the former~\cite{noep}.

\begin{definition}
A stochastic process $\{\varphi(t), t \in[0,T]\}$ is \textit{forward integrable} (in the weak sense) over $[0,T]$ with respect to Brownian motion $\{B(t), t\in[0,T]\}$ if there exists a stochastic process $\{I(t), t \in [0,T]\}$ such that
\begin{equation}\nonumber
\sup_{t\in[0,T]} \left | \int_0^t \varphi(s) \frac{B(s+\varepsilon)-B(s)}{\varepsilon} ds - I(t) \right| \to 0, \ \ \  \ \mbox{as} \ \varepsilon \to 0^+,
\end{equation}
in probability. In this case, $I(t)$ is the \textit{forward integral} of $\varphi(t)$ with respect to $B(t)$ on $[0,T]$ and we denote
\begin{equation*}
I(t) := \int_0^t \varphi(s) \, d^- B(s),  \ \ \ \ t \in [0,T].
\end{equation*}
\end{definition}

When the choice is the Russo-Vallois integral, we face the initial value problem
\begin{subequations}
\begin{eqnarray}\label{rv1}
d^- S_1 &=& \mu \, S_1 \, dt + \sigma \, S_1 \, d^- B_t \\ \label{rv2}
S_1(0) &=& M \left[ 1 + \frac{\sigma B_T -\sigma^2 T/2}{2(\mu - \rho) T} \right],
\end{eqnarray}
\end{subequations}
for a forward stochastic differential equation. Again, this problem possesses a unique solution~\cite{noep}.

\begin{theorem}\label{mainthrv}
The expected value of the total wealth of the insider at time $t=T$ is
\begin{eqnarray}\nonumber
\mathbb{E}\left[S^{\text{(RV)}}(T)\right]= M \left\{ \frac{\sigma^2}{4(\mu - \rho)} \, e^{\rho T}
+ \left[ 1 + \frac{\sigma^2}{4(\mu - \rho)} \right] e^{\mu T} \right\},
\end{eqnarray}
for model~\eqref{rode1}-\eqref{rode2} and~\eqref{rv1}-\eqref{rv2}.
\end{theorem}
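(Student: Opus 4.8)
The plan is to mirror the scheme used in the proof of Theorem~\ref{mainthhs}, while exploiting the one structural feature that distinguishes the forward integral from the Hitsuda--Skorokhod one: the forward stochastic differential equation~\eqref{rv1}--\eqref{rv2} is solved by an \emph{ordinary} exponential multiplied by the anticipating initial condition, not by a Wick exponential. Concretely, I would first recall that the forward integral pulls out a multiplicative factor that does not depend on time even when that factor is anticipating, because in the defining limit of Riemann--Lebesgue sums such a factor simply comes out of the integral and then out of the limit. Hence, letting $Z(t)=\exp\!\left[\left(\mu-\frac{\sigma^2}{2}\right)t+\sigma B_t\right]$ be the solution of the adapted linear equation $d^-Z=\mu\,Z\,dt+\sigma\,Z\,d^-B_t$ with $Z(0)=1$ (which coincides with the It\^o solution since $Z$ is adapted), the forward It\^o formula gives that
\[
S_1(t)= M \left[ 1 + \frac{\sigma B_T -\sigma^2 T/2}{2(\mu - \rho) T} \right]\exp\!\left[\left(\mu-\frac{\sigma^2}{2}\right)t+\sigma B_t\right]
\]
solves~\eqref{rv1}--\eqref{rv2}; uniqueness is guaranteed by~\cite{noep}.

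The second step is to compute $\mathbb{E}[S_1(T)]$ directly. This is where the contrast with Theorem~\ref{mainthhs} is decisive: since $S_1(T)$ now involves an ordinary product, the factorization property of the expectation of a Wick product is unavailable, and the anticipating prefactor genuinely correlates with the exponential. I would reduce everything to the two elementary Gaussian moments $\mathbb{E}\!\left[e^{\sigma B_T}\right]=e^{\sigma^2 T/2}$ and $\mathbb{E}\!\left[B_T e^{\sigma B_T}\right]=\sigma T\, e^{\sigma^2 T/2}$, the latter obtained by differentiating the moment generating function of $B_T\sim\mathcal{N}(0,T)$ in $\sigma$ (equivalently, by Gaussian integration by parts). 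Substituting and collecting terms, the linear-in-$B_T$ piece contributes $\frac{\sigma\cdot\sigma T-\sigma^2T/2}{2(\mu-\rho)T}=\frac{\sigma^2}{4(\mu-\rho)}$ after factoring out $e^{\sigma^2 T/2}$, and the prefactor $e^{(\mu-\sigma^2/2)T}$ combines with that $e^{\sigma^2 T/2}$ to yield $\mathbb{E}[S_1(T)]=M\,e^{\mu T}\!\left[1+\frac{\sigma^2}{4(\mu-\rho)}\right]$.

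Finally, the $S_0$ contribution is identical to the one already computed in the proof of Theorem~\ref{mainthhs}: problem~\eqref{rode1}--\eqref{rode2} is an ordinary differential equation with a random initial condition, so $\mathbb{E}[S_0(T)]=M\frac{\sigma^2}{4(\mu-\rho)}\,e^{\rho T}$ upon using $\mathbb{E}[B_T]=0$. Adding the two terms gives the announced formula. I do not anticipate a serious obstacle; the only delicate point is justifying that the forward equation is solved by the ordinary (rather than Wick) exponential, i.e.\ invoking the forward It\^o formula together with the pull-out property of the forward integral, since this is exactly what turns the $-\sigma^2/[4(\mu-\rho)]$ of the Hitsuda--Skorokhod case into the $+\sigma^2/[4(\mu-\rho)]$ appearing here.
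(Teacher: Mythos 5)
Your proposal is correct and follows essentially the same route as the paper: it identifies the solution of the forward equation as the anticipating initial condition times the ordinary (non-Wick) stochastic exponential, invoking the fact that the Russo--Vallois integral preserves It\^o calculus, and then evaluates the expectation via the Gaussian moments $\mathbb{E}[e^{\sigma B_T}]=e^{\sigma^2T/2}$ and $\mathbb{E}[B_Te^{\sigma B_T}]=\sigma T e^{\sigma^2T/2}$, exactly as in the paper's computation. Your added remark on the pull-out property of the forward integral is a slightly more explicit justification of the step the paper simply cites to~\cite{noep}, and all the resulting coefficients agree with the stated formula.
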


\begin{proof}
The Russo-Vallois integral preserves It\^o calculus~\cite{noep} so using this classical stochastic calculus
it is possible to solve problem~\eqref{rv1}-\eqref{rv2} explicitly to find
$$
S_1(t)= M \left[ 1 + \frac{\sigma B_T -\sigma^2 T/2}{2(\mu - \rho) T} \right]
\exp \left[ \left( \mu -\frac{\sigma^2}{2} \right) t + \sigma B_t \right].
$$
For the expected wealth at the terminal time we find:
\begin{eqnarray}\nonumber
\mathbb{E}\left[S^{\text{(RV)}}(T)\right] &=& \mathbb{E}[S_0(T)] + \mathbb{E}[S_1(T)] \\ \nonumber
&=& \mathbb{E}\left[ M \frac{\sigma^2 T/2 -\sigma B_T}{2(\mu - \rho) T} \right] e^{\rho T} \\ \nonumber
& & + \mathbb{E} \left\{ M \left[ 1 + \frac{\sigma B_T -\sigma^2 T/2}{2(\mu - \rho) T} \right]
\exp \left[ \left( \mu -\frac{\sigma^2}{2} \right) T + \sigma B_T \right] \right\} \\ \nonumber
&=& M \frac{\sigma^2 T/2 -\sigma \, \mathbb{E}(B_T)}{2(\mu - \rho) T} \, e^{\rho T} \\ \nonumber
& & + M \frac{\sigma}{2(\mu - \rho) T} \, \mathbb{E} \left\{ B_T \,
\exp \left[ \left( \mu -\frac{\sigma^2}{2} \right) T + \sigma B_T \right] \right\} \\ \nonumber
& & + M \left[ 1 - \frac{\sigma^2}{4(\mu - \rho)} \right] \mathbb{E} \left\{
\exp \left[ \left( \mu -\frac{\sigma^2}{2} \right) T + \sigma B_T \right] \right\} \\ \nonumber
&=& M \frac{\sigma^2}{4(\mu - \rho)} \, e^{\rho T} \\ \nonumber
& & + M \frac{\sigma^2}{2(\mu - \rho)} \, e^{\mu T} \\ \nonumber
& & + M \left[ 1 - \frac{\sigma^2}{4(\mu - \rho)} \right] e^{\mu T} \\ \nonumber
&=& M \left\{ \frac{\sigma^2}{4(\mu - \rho)} \, e^{\rho T}
+ \left[ 1 + \frac{\sigma^2}{4(\mu - \rho)} \right] e^{\mu T} \right\},
\end{eqnarray}
where we have used that $B_T \sim \mathcal{N}(0,T)$.
\end{proof}

\begin{corollary}
The expected value of the total wealth of the insider at time $t=T$ is strictly larger than that of the honest trader, i.~e.
$$
\mathbb{E}\left[S^{\text{(RV)}}(T)\right] > \mathbb{E}\left[S^{\text{(I)}}(T)\right].
$$
\end{corollary}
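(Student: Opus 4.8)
The plan is to reduce the inequality to a manifestly positive quantity by direct subtraction, using the closed-form expression from Theorem~\ref{mainthrv} together with the value of $\mathbb{E}[S^{\text{(I)}}(T)]$ recorded in~\eqref{average}. First I would write
\begin{equation*}
\mathbb{E}\left[S^{\text{(RV)}}(T)\right] - \mathbb{E}\left[S^{\text{(I)}}(T)\right]
= M \left\{ \frac{\sigma^2}{4(\mu-\rho)}\, e^{\rho T} + \left[ 1 + \frac{\sigma^2}{4(\mu-\rho)} \right] e^{\mu T} - e^{\mu T} \right\},
\end{equation*}
and then cancel the $e^{\mu T}$ terms so that the bracket collapses to $\dfrac{\sigma^2}{4(\mu-\rho)}\left( e^{\rho T} + e^{\mu T} \right)$.

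The second and final step is to observe that every factor in this expression is strictly positive: $M \in \mathbb{R}^+$ by assumption, $\sigma^2 > 0$ since $\sigma \in \mathbb{R}^+$, the denominator $\mu - \rho$ is strictly positive because of the standing hypothesis $\mu > \rho$, and $e^{\rho T} + e^{\mu T} > 0$ trivially. Hence the difference is strictly positive, which is exactly the claim.

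There is, frankly, no obstacle here: once Theorem~\ref{mainthrv} is in hand, the corollary is a one-line consequence of the sign of the parameters, in direct contrast with the Hitsuda--Skorokhod case where the analogous difference carries a minus sign in front of the $\sigma^2/[4(\mu-\rho)]$ term. The only point worth stating explicitly in the write-up is which hypotheses ($M,\sigma>0$ and $\mu>\rho$) are being invoked to guarantee strictness, so that the comparison with the preceding corollary is transparent.
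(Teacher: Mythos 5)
Your proof is correct and is exactly the argument the paper intends: the corollary is stated without proof as an immediate consequence of Theorem~\ref{mainthrv}, and your subtraction yields the manifestly positive quantity $M\,\sigma^2\left(e^{\rho T}+e^{\mu T}\right)/\left[4(\mu-\rho)\right]$ under the standing hypotheses $M,\sigma>0$ and $\mu>\rho$. Nothing further is needed.
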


\section{The Ayed-Kuo integral}

The Ayed-Kuo integral was introduced in~\cite{akuo1,akuo2} and, as the two previous theories, generalizes the It\^o integral to anticipating integrands.
Let us now consider a Brownian motion $\{B_t, t \geq 0\}$ and a filtration $\{\F_t, t \geq 0\}$ such that:
\begin{itemize}
\item[(i)] For all $t \geq 0$, $B_t$ is $\F_t$-measurable,
\item[(ii)] for all $0 \leq s < t$, $B_t-B_s$ is independent of $\F_s$.
\end{itemize}

We also recall the notion of instantly independent stochastic process introduced in~\cite{akuo1}.

\begin{definition}
A stochastic process $\{\varphi(t)$, $t \in [0,T]\}$ is \textit{instantly independent} with respect to the filtration $\{\F_t, t \in [0,T]\}$ if and only if $\varphi(t)$ is independent of $\F_t$ for each $t \in [0,T]$.
\end{definition}

The Ayed-Kuo integral is defined for integrands that can be expressed as the product of an instantly independent process and a $\{\F_t\}$-adapted stochastic process. Now we recall its definition as given in~\cite{akuo1}.

\begin{definition}
Let $\{f(t), t\in [0,T]\}$ be a $\{\F_t\}$-adapted stochastic process and $\{\varphi(t), t\in [0,T]\}$ an instantly independent stochastic process with respect to $\{\F_t, t \in [0,T]\}$. We define the \textit{Ayed-Kuo stochastic integral} of $f(t)\varphi(t)$ by
\begin{equation}\nonumber
\int_0^T f(t) \varphi(t) \, d^*B_t : = \lim_{\left| \Pi_n \right| \to 0} \sum_{i=1}^n f(t_{i-1}) \varphi(t_i) \left[B(t_i)-B(t_{i-1}) \right]
\end{equation}
in probability, provided that the limit exists, where the $\Pi_n$'s are partitions of the interval $[0,T]$.
\end{definition}

For the Ayed-Kuo integral the initial value problem reads
\begin{subequations}
\begin{eqnarray} \label{ak1}
d^* S_1 &=& \mu \, S_1 \, dt + \sigma \, S_1 \, d^* B_t \\ \label{ak2}
S_1(0) &=& M \left[ 1 + \frac{\sigma B_T -\sigma^2 T/2}{2(\mu - \rho) T} \right],
\end{eqnarray}
\end{subequations}
where $d^*$ denotes the Ayed-Kuo stochastic integral.
This anticipating stochastic differential equation, as the previous two cases, has a unique and explicitly computable solution~\cite{hksz}.

\begin{theorem}\label{mainthak}
The expected value of the total wealth of the insider at time $t=T$ is
\begin{eqnarray}\nonumber
\mathbb{E}\left[S^{\text{(AK)}}(T)\right]= M \left\{ \frac{\sigma^2}{4(\mu - \rho)} \, e^{\rho T}
+ \left[ 1 - \frac{\sigma^2}{4(\mu - \rho)} \right] e^{\mu T} \right\},
\end{eqnarray}
for model~\eqref{rode1}-\eqref{rode2} and~\eqref{ak1}-\eqref{ak2}.
\end{theorem}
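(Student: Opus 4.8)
The plan is to mirror the two proofs just given for Theorems~\ref{mainthhs} and~\ref{mainthrv}: first produce the explicit solution of the Ayed-Kuo initial value problem~\eqref{ak1}-\eqref{ak2}, and then compute $\mathbb{E}\left[S^{\text{(AK)}}(T)\right]=\mathbb{E}[S_0(T)]+\mathbb{E}[S_1(T)]$ term by term. The bond contribution is identical to the one already obtained, since~\eqref{rode1}-\eqref{rode2} is an ordinary differential equation subject to a random initial datum; using $B_T\sim\mathcal N(0,T)$, and hence $\mathbb{E}(B_T)=0$, one gets $\mathbb{E}[S_0(T)]=\mathbb{E}[S_0(0)]\,e^{\rho T}=M\,\frac{\sigma^2}{4(\mu-\rho)}\,e^{\rho T}$.

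The substantive step is to invoke the explicit solution formula for linear Ayed-Kuo stochastic differential equations with anticipating initial data~\cite{hksz}. For an equation $d^{*}X_t=\alpha X_t\,dt+\beta X_t\,d^{*}B_t$ with constant $\alpha,\beta$ and initial condition $X_0=g(B_T)$ an affine function of $B_T$, the solution is obtained from the geometric Brownian motion formula by shifting the terminal value inside the initial datum, $B_T\mapsto B_T-\beta t$; in our case this reads
\begin{equation*}
S_1(t)=M\left[1+\frac{\sigma(B_T-\sigma t)-\sigma^2 T/2}{2(\mu-\rho)T}\right]\exp\left[\left(\mu-\frac{\sigma^2}{2}\right)t+\sigma B_t\right].
\end{equation*}
(The same shift, with $\beta=1$, turns the canonical example $d^{*}X_t=X_t\,d^{*}B_t$, $X_0=B_T$, into $X_t=(B_T-t)\,e^{B_t-t/2}$.) I would then set $t=T$, at which point the integrand becomes a product of a term affine in $B_T$ and the factor $\exp\left[(\mu-\sigma^2/2)T+\sigma B_T\right]$, so the expectation reduces to the two elementary Gaussian moments $\mathbb{E}\!\left[e^{(\mu-\sigma^2/2)T+\sigma B_T}\right]=e^{\mu T}$ and $\mathbb{E}\!\left[B_T\,e^{(\mu-\sigma^2/2)T+\sigma B_T}\right]=\sigma T\,e^{\mu T}$. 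A short computation then shows that the linear-in-$B_T$ part of the shifted initial condition contributes exactly the amount needed to offset the deterministic $-\sigma T$ shift, leaving $\mathbb{E}[S_1(T)]=M\left[1-\frac{\sigma^2}{4(\mu-\rho)}\right]e^{\mu T}$. Adding the two contributions yields the stated value; I would also point out that it coincides with the Hitsuda-Skorokhod answer of Theorem~\ref{mainthhs}, although here the cancellation of the cross term comes from the shift of $B_T$ rather than from the factorization of the expectation of a Wick product.

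The main obstacle is precisely the correct application of the solution formula from~\cite{hksz}: one must check that its hypotheses are satisfied (constant coefficients, and smooth --- here affine --- dependence of the initial condition on $B_T$) and, above all, get the sign and magnitude of the shift $B_T\mapsto B_T-\sigma t$ right, since this is the only point at which the specific Ayed-Kuo theory enters, and everything downstream is the same Gaussian bookkeeping already carried out in the two preceding proofs.
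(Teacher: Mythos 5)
Your proposal is correct and follows essentially the same route as the paper: both obtain the explicit Ayed-Kuo solution via the It\^o formula of~\cite{hksz}, which amounts to the shift $B_T\mapsto B_T-\sigma t$ in the initial datum (your expression $\sigma(B_T-\sigma t)$ is identical to the paper's $\sigma B_T-\sigma^2 t$ in~\eqref{explicitak}), and then evaluate the two Gaussian moments $\mathbb{E}\bigl[e^{(\mu-\sigma^2/2)T+\sigma B_T}\bigr]=e^{\mu T}$ and $\mathbb{E}\bigl[B_T\,e^{(\mu-\sigma^2/2)T+\sigma B_T}\bigr]=\sigma T\,e^{\mu T}$. Your observation that the $-\sigma^2 T$ shift exactly cancels the contribution of the linear-in-$B_T$ term is precisely the cancellation that occurs in the paper's displayed computation.
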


\begin{proof}
Using the It\^o formula for the Ayed-Kuo integral~\cite{hksz} it is possible to solve problem~\eqref{ak1}-\eqref{ak2} explicitly to find
\begin{equation}\label{explicitak}
S_1(t)= M \left[ 1 + \frac{\sigma B_T -\sigma^2 t -\sigma^2 T/2}{2(\mu - \rho) T} \right]
\exp \left[ \left( \mu -\frac{\sigma^2}{2} \right) t + \sigma B_t \right].
\end{equation}
For the expected wealth at the terminal time we find:
\begin{eqnarray}\nonumber
\mathbb{E}\left[S^{\text{(AK)}}(T)\right] &=& \mathbb{E}[S_0(T)] + \mathbb{E}[S_1(T)] \\ \nonumber
&=& \mathbb{E}\left[ M \frac{\sigma^2 T/2 -\sigma B_T}{2(\mu - \rho) T} \right] e^{\rho T} \\ \nonumber
& & + \mathbb{E} \left\{ M \left[ 1 + \frac{\sigma B_T -3\sigma^2 T/2}{2(\mu - \rho) T} \right]
\exp \left[ \left( \mu -\frac{\sigma^2}{2} \right) T + \sigma B_T \right] \right\} \\ \nonumber
&=& M \frac{\sigma^2 T/2 -\sigma \, \mathbb{E}(B_T)}{2(\mu - \rho) T} \, e^{\rho T} \\ \nonumber
& & + M \frac{\sigma}{2(\mu - \rho) T} \, \mathbb{E} \left\{ B_T \,
\exp \left[ \left( \mu -\frac{\sigma^2}{2} \right) T + \sigma B_T \right] \right\} \\ \nonumber
& & + M \left[ 1 - \frac{3\sigma^2}{4(\mu - \rho)} \right] \mathbb{E} \left\{
\exp \left[ \left( \mu -\frac{\sigma^2}{2} \right) T + \sigma B_T \right] \right\} \\ \nonumber
&=& M \frac{\sigma^2}{4(\mu - \rho)} \, e^{\rho T} \\ \nonumber
& & + M \frac{\sigma^2}{2(\mu - \rho)} \, e^{\mu T} \\ \nonumber
& & + M \left[ 1 - \frac{3\sigma^2}{4(\mu - \rho)} \right] e^{\mu T} \\ \nonumber
&=& M \left\{ \frac{\sigma^2}{4(\mu - \rho)} \, e^{\rho T}
+ \left[ 1 - \frac{\sigma^2}{4(\mu - \rho)} \right] e^{\mu T} \right\},
\end{eqnarray}
where we have used that $B_T \sim \mathcal{N}(0,T)$.
\end{proof}

\begin{corollary}
The expected value of the total wealth of the insider at time $t=T$ is strictly smaller than that of the honest trader, i.~e.
$$
\mathbb{E}\left[S^{\text{(AK)}}(T)\right] < \mathbb{E}\left[S^{\text{(I)}}(T)\right].
$$
\end{corollary}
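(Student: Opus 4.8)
The plan is to reduce the claim to a one-line sign check between two explicit formulas. By Theorem~\ref{mainthak} the expected terminal wealth of the insider equals
$$\mathbb{E}\left[S^{\text{(AK)}}(T)\right]= M \left\{ \frac{\sigma^2}{4(\mu - \rho)} \, e^{\rho T} + \left[ 1 - \frac{\sigma^2}{4(\mu - \rho)} \right] e^{\mu T} \right\},$$
whereas, according to~\eqref{average}, the risk-neutral honest trader attains $\mathbb{E}[S^{\text{(I)}}(T)] = M e^{\mu T}$. First I would subtract the former from the latter; the term $M e^{\mu T}$ partially cancels against the $M\bigl[1 - \sigma^2/(4(\mu-\rho))\bigr]e^{\mu T}$ contribution, leaving
$$\mathbb{E}\left[S^{\text{(I)}}(T)\right] - \mathbb{E}\left[S^{\text{(AK)}}(T)\right] = M\,\frac{\sigma^2}{4(\mu - \rho)}\left( e^{\mu T} - e^{\rho T} \right).$$

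Next I would read off the sign of each factor on the right-hand side using the standing hypotheses of the model: $M>0$ and $\sigma>0$, so $\sigma^2>0$; the assumption $\mu>\rho$ gives $\mu-\rho>0$, hence the prefactor $\sigma^2/[4(\mu-\rho)]$ is strictly positive; and since $T>0$ together with $\rho<\mu$ we get $e^{\mu T}>e^{\rho T}$, so the parenthesis is strictly positive. A product of strictly positive quantities is strictly positive, which yields $\mathbb{E}[S^{\text{(I)}}(T)] - \mathbb{E}[S^{\text{(AK)}}(T)] > 0$, i.e.\ exactly the asserted strict inequality.

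I do not expect any genuine obstacle here: this is the same argument that establishes the analogous corollary after Theorem~\ref{mainthhs}, and indeed it must be, since the Ayed-Kuo and Hitsuda-Skorokhod expectations coincide in this model. The only points worth flagging explicitly in the write-up are that every constant entering the prefactor is strictly positive (so the inequality does not degenerate to an equality) and that the strictness relies on $T>0$; at $T=0$ one would merely recover $\mathbb{E}[S^{\text{(I)}}(0)] = \mathbb{E}[S^{\text{(AK)}}(0)] = M$.
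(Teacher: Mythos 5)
Your computation is correct and is exactly the (immediate) argument the paper intends: the corollary is stated without proof because subtracting the formula of Theorem~\ref{mainthak} from~\eqref{average} leaves $M\,\sigma^2\,(e^{\mu T}-e^{\rho T})/[4(\mu-\rho)]>0$ under the standing hypotheses $M,\sigma>0$, $\mu>\rho$, $T>0$. Your remarks on where strictness comes from are accurate and add nothing problematic.
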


The fact that the statements of Theorems~\ref{mainthhs} and~\ref{mainthak} coincides identically is not a casuality.
This follows from the conjectured equivalence
between the Hitsuda-Skorokhod and the Ayed-Kuo integrals~\cite{kuo2014}, which can be proven in this particular case.

\begin{proposition}\label{prophssol}
Formula~\eqref{explicitak} yields the unique solution to initial value problem~\eqref{sko1}-\eqref{sko2}.
\end{proposition}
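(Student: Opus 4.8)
The plan is to identify the explicit Ayed--Kuo expression \eqref{explicitak} with the Hitsuda--Skorokhod solution already produced inside the proof of Theorem~\ref{mainthhs}, namely
$$
\widetilde{S}_1(t)= M \left[ 1 + \frac{\sigma B_T -\sigma^2 T/2}{2(\mu - \rho) T} \right] \diamond \exp\!\left[ \left( \mu -\tfrac{\sigma^2}{2} \right) t + \sigma B_t \right].
$$
Since existence and uniqueness for the linear problem \eqref{sko1}--\eqref{sko2} are already guaranteed by the theory in~\cite{lssdes}, it suffices to check that $\widetilde{S}_1(t)$ coincides with the right-hand side of \eqref{explicitak} for every $t\in[0,T]$.

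First I would peel off the deterministic part of the exponential, writing
$$
\exp\!\left[ \left( \mu -\tfrac{\sigma^2}{2} \right) t + \sigma B_t \right] = e^{\mu t}\,\exp\!\left( \sigma B_t - \tfrac{\sigma^2}{2}\,t \right) = e^{\mu t}\, \mathcal{E}\!\left( \sigma\,\1_{[0,t]} \right),
$$
where $\mathcal{E}(h) := \exp\!\big( \int_0^T h\,dB_s - \tfrac12\int_0^T h^2\,ds \big) = \sum_{n\ge 0} \tfrac{1}{n!}\,I_n(h^{\otimes n})$ is the stochastic (Wick) exponential, so that the surviving factor is itself a Wick exponential. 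Because the initial datum is affine in $B_T$, I would write it as $c_0 + c_1 B_T$ with $c_0 = M - \tfrac{M\sigma^2}{4(\mu-\rho)}$ and $c_1 = \tfrac{M\sigma}{2(\mu-\rho)T}$; bilinearity of $\diamond$ then reduces the whole computation to evaluating $B_T \diamond \mathcal{E}(\sigma\,\1_{[0,t]})$.

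The crucial ingredient is the translation identity
$$
I_1(g)\diamond \mathcal{E}(h) = \big( I_1(g) - \langle g,h\rangle_{L^2([0,T])} \big)\,\mathcal{E}(h), \qquad g,h\in L^2([0,T]),
$$
which I would obtain from the chaos expansion of $\mathcal{E}(h)$ together with the product formula $I_1(g)\,I_n(h^{\otimes n}) = I_{n+1}(g\,\widetilde{\otimes}\,h^{\otimes n}) + n\langle g,h\rangle\,I_{n-1}(h^{\otimes(n-1)})$ and the defining relation $I_1(g)\diamond I_n(h^{\otimes n}) = I_{n+1}(g\,\widetilde{\otimes}\,h^{\otimes n})$; the series converge in $L^2(\Omega)$ since $B_T$ has finite moments of all orders and $\mathcal{E}(h)\in L^p(\Omega)$ for every $p$. (Equivalently, one may invoke $B_T\diamond F = B_T F - \int_0^T D_sF\,ds$ and $D_s\mathcal{E}(\sigma\1_{[0,t]}) = \sigma\1_{[0,t]}(s)\,\mathcal{E}(\sigma\1_{[0,t]})$.) Taking $g=\1_{[0,T]}$, $h=\sigma\,\1_{[0,t]}$ gives $\langle g,h\rangle = \sigma t$, hence $B_T\diamond\mathcal{E}(\sigma\,\1_{[0,t]}) = (B_T - \sigma t)\,\mathcal{E}(\sigma\,\1_{[0,t]})$, and therefore
$$
\widetilde{S}_1(t) = e^{\mu t}\big( c_0 + c_1 B_T - c_1\sigma t \big)\,\mathcal{E}(\sigma\,\1_{[0,t]}) = M\left[ 1 + \frac{\sigma B_T - \sigma^2 t - \sigma^2 T/2}{2(\mu-\rho)T} \right]\exp\!\left[ \left( \mu - \tfrac{\sigma^2}{2} \right) t + \sigma B_t \right]
$$
after substituting the values of $c_0,c_1$ and recombining $e^{\mu t}\mathcal{E}(\sigma\1_{[0,t]})$; this is exactly \eqref{explicitak}.

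I expect the only genuinely delicate point to be the rigorous justification of the translation identity together with the $L^2(\Omega)$-convergence of the Wick series involved; once that is secured, the remainder is elementary bookkeeping with the law $B_T\sim\mathcal{N}(0,T)$ and the constants $c_0,c_1$. An alternative, more pedestrian route would avoid $\mathcal{E}(h)$ entirely and instead expand both \eqref{explicitak} and $\widetilde{S}_1(t)$ into Wiener chaos, matching kernels order by order; this makes the convergence transparent at the cost of heavier computation.
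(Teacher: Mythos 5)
Your argument is correct, but it takes a genuinely different route from the paper. The paper converts the Hitsuda--Skorokhod problem~\eqref{sko1}--\eqref{sko2} into an equivalent Russo--Vallois forward equation with the drift correction $-\sigma D_{t^+}S_1$, computes the Malliavin derivative of the Wick-product solution, and then verifies by ordinary It\^o calculus that~\eqref{explicitak} solves the resulting forward equation. You instead stay entirely at the level of the Wick calculus: starting from the Wick-product solution already exhibited in the proof of Theorem~\ref{mainthhs} (whose uniqueness is covered by~\cite{lssdes}), you evaluate the Wick product explicitly via the translation identity
$$
I_1(g)\diamond \mathcal{E}(h) = \bigl( I_1(g) - \langle g,h\rangle_{L^2([0,T])} \bigr)\,\mathcal{E}(h),
$$
which with $g=\1_{[0,T]}$, $h=\sigma\1_{[0,t]}$ gives $B_T\diamond\mathcal{E}(\sigma\1_{[0,t]})=(B_T-\sigma t)\,\mathcal{E}(\sigma\1_{[0,t]})$ and hence~\eqref{explicitak} after elementary bookkeeping; your verification of the identity from the chaos product formula, and the integrability remarks justifying the $L^2(\Omega)$ convergence, are sound. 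Your route is more self-contained and makes the origin of the extra drift term $-\sigma^2 t$ completely transparent as the contraction $\langle \1_{[0,T]},\sigma\1_{[0,t]}\rangle=\sigma t$, whereas the paper's route trades that algebra for the standard Skorokhod-to-forward dictionary, which has the side benefit of tying the proposition to the Russo--Vallois formulation used in the rest of the paper. Both arguments lean on the same existence and uniqueness theory for linear Skorokhod equations, so either is a complete proof.
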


\begin{proof}
Initial value problem~\eqref{sko1}-\eqref{sko2} is equivalent to
\begin{subequations}
\begin{eqnarray}\label{hs1}
d^- S_1 &=& \left( \mu \, S_1 - \sigma \, D_{t^+} S_1 \right) d t + \sigma \, S_1 \, d^- B_t \\ \label{hs2}
S_1(0) &=& M \left[ 1 + \frac{\sigma B_T -\sigma^2 T/2}{2(\mu - \rho) T} \right],
\end{eqnarray}
\end{subequations}
where $D_{t^+} S_1(t) := \lim_{s \to t^+} D_{s} S_1(t)$ and $D_{s} S_1(t)$ denotes the Malliavin derivative of $S_1(t)$, see~\cite{noep,russovallois}.
Recalling from the proof of Theorem~\ref{mainthhs} that
$$
S_1(t)= M \left[ 1 + \frac{\sigma B_T -\sigma^2 T/2}{2(\mu - \rho) T} \right] \diamond
\exp \left[ \left( \mu -\frac{\sigma^2}{2} \right) t + \sigma B_t \right],
$$
we can compute
\begin{equation}\label{malliavin}
D_{t^+} S_1(t) = \frac{\sigma M}{2(\mu - \rho) T}
\exp \left[ \left( \mu -\frac{\sigma^2}{2} \right) t + \sigma B_t \right].
\end{equation}
Since equation~\eqref{hs1} is a Russo-Vallois forward stochastic differential equation the usual rules of It\^o stochastic calculus apply to it.
Then it is a simple exercise to check that formula~\eqref{explicitak} solves initial value problem~\eqref{hs1}-\eqref{hs2} with
the substitution~\eqref{malliavin}.
\end{proof}

Finally, we state a theorem that shows that the expected value of the wealth of the Ayed-Kuo insider always falls under the expected value
of the wealth of the Russo-Vallois insider.

\begin{theorem}
The respective solutions to the initial value problems
\begin{eqnarray}\nonumber
d^- S^- &=& \mu \, S^- \, dt + \sigma \, S^- \, d^- B_t \\ \nonumber
S^-(0) &=& \mathcal{C}(B_T),
\end{eqnarray}
and
\begin{eqnarray}\nonumber
d^* S^* &=& \mu \, S^* \, dt + \sigma \, S^* \, d^* B_t \\ \nonumber
S^*(0) &=& \mathcal{C}(B_T),
\end{eqnarray}
where $\mathcal{C}(\cdot)$ denotes an arbitrary monotonically increasing function that is both nonconstant and continuous, fulfil
$$
\mathbb{E}[S^*(T)] < \mathbb{E}[S^-(T)].
$$
\end{theorem}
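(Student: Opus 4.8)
The plan is to reduce the assertion to an almost sure pointwise comparison between the explicit solutions of the two equations, and then integrate. Since the Russo-Vallois forward integral preserves the rules of It\^o calculus, and since $\mathcal{C}(B_T)$ is $\F_T$-measurable and hence behaves as a constant for the forward calculus on $[0,T]$ (exactly as in the proof of Theorem~\ref{mainthrv}), the Russo-Vallois problem is solved by
$$
S^-(t) = \mathcal{C}(B_T)\,\exp\left[\left(\mu - \frac{\sigma^2}{2}\right)t + \sigma B_t\right].
$$
For the Ayed-Kuo problem, the It\^o formula for the Ayed-Kuo integral~\cite{hksz} yields, by the same computation that produced~\eqref{explicitak},
$$
S^*(t) = \mathcal{C}(B_T - \sigma t)\,\exp\left[\left(\mu - \frac{\sigma^2}{2}\right)t + \sigma B_t\right],
$$
which indeed reduces to~\eqref{explicitak} when $\mathcal{C}$ is the affine datum of~\eqref{ak2}; the deterministic shift $B_T \mapsto B_T - \sigma t$ here plays the role of the Wick correction encountered in Proposition~\ref{prophssol}.

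Evaluating at $t=T$ and subtracting,
$$
S^-(T) - S^*(T) = \bigl[\mathcal{C}(B_T) - \mathcal{C}(B_T - \sigma T)\bigr]\,\exp\left[\left(\mu - \frac{\sigma^2}{2}\right)T + \sigma B_T\right].
$$
The exponential factor is strictly positive, and since $\sigma T > 0$ and $\mathcal{C}$ is non-decreasing, the bracket is everywhere nonnegative. Hence $S^-(T) \geq S^*(T)$ almost surely, so $\mathbb{E}[S^-(T)] \geq \mathbb{E}[S^*(T)]$ (the relevant expectations being finite, which is implicit in the statement).

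It remains to upgrade this to a strict inequality, for which it suffices to show $\mathbb{P}\bigl(\mathcal{C}(B_T) > \mathcal{C}(B_T - \sigma T)\bigr) > 0$. Put $h(x) := \mathcal{C}(x) - \mathcal{C}(x - \sigma T)$, a continuous nonnegative function. If $h \equiv 0$, then $\mathcal{C}$ is periodic with period $\sigma T > 0$, and a non-decreasing periodic function on $\mathbb{R}$ is constant, contradicting the hypothesis that $\mathcal{C}$ is nonconstant. Therefore $h(x_0) > 0$ for some $x_0$, and by continuity $h > 0$ on an open interval, a set of positive Lebesgue measure; since $B_T \sim \mathcal{N}(0,T)$ has a strictly positive density, the event $\{h(B_T) > 0\}$ has positive probability, and on it $S^-(T) > S^*(T)$. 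Together with $S^-(T) \geq S^*(T)$ a.s.\ this gives $\mathbb{E}[S^-(T) - S^*(T)] > 0$, that is $\mathbb{E}[S^*(T)] < \mathbb{E}[S^-(T)]$. (Alternatively, a change of measure reduces the two expectations to $e^{\mu T}\mathbb{E}[\mathcal{C}(B_T + \sigma T)]$ and $e^{\mu T}\mathbb{E}[\mathcal{C}(B_T)]$, and the conclusion follows from $\mathbb{E}[\mathcal{C}(B_T + \sigma T) - \mathcal{C}(B_T)] > 0$, proven in the same way.)

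The only step that is not routine is justifying the explicit form of the Ayed-Kuo solution for a general continuous monotone $\mathcal{C}$, rather than for the affine datum treated in the body of the paper: one must check that the existence-uniqueness theory and the It\^o formula of~\cite{hksz} apply to the integrand $\mathcal{C}(B_T - \sigma t)\exp[(\mu - \sigma^2/2)t + \sigma B_t]$ and that this process is genuinely the unique solution of the Ayed-Kuo equation with initial condition $\mathcal{C}(B_T)$. Once the formula $S^*(t) = \mathcal{C}(B_T - \sigma t)\exp[(\mu - \sigma^2/2)t + \sigma B_t]$ is secured, the remaining comparison is elementary.
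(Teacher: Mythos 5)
Your proposal is correct and follows essentially the same route as the paper: derive the explicit solutions $S^-(t)=\mathcal{C}(B_T)\exp[(\mu-\sigma^2/2)t+\sigma B_t]$ and $S^*(t)=\mathcal{C}(B_T-\sigma t)\exp[(\mu-\sigma^2/2)t+\sigma B_t]$, compare them pointwise via the monotonicity of $\mathcal{C}$, and integrate against the Gaussian density of $B_T$. The one place you go beyond the paper is the strictness step, where the paper merely asserts that the inequality $\mathcal{C}(B_T-\sigma T)\le\mathcal{C}(B_T)$ is strict on some interval, while you actually prove it via the observation that a non-decreasing function satisfying $\mathcal{C}(x)=\mathcal{C}(x-\sigma T)$ for all $x$ would be periodic and hence constant --- a worthwhile addition, as is your closing caveat that the Ayed-Kuo solution formula for a general continuous monotone datum rests on the calculus rules of the cited reference rather than on the affine case treated explicitly in the text.
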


\begin{proof}
We can compute the explicit solutions using respectively the calculus rules for the Russo-Vallois integral~\cite{noep}
and the Ayed-Kuo integral~\cite{hksz} to find
$$
S^-(t)= \mathcal{C}(B_T) \exp \left[ \left( \mu -\frac{\sigma^2}{2} \right) t + \sigma B_t \right],
$$
and
\begin{equation}\label{sstar}
S^*(t)= \mathcal{C}(B_T -\sigma t) \exp \left[ \left( \mu -\frac{\sigma^2}{2} \right) t + \sigma B_t \right].
\end{equation}
Since by monotonicity $\mathcal{C}(B_T -\sigma t) \le \mathcal{C}(B_T)$ for all $t>0$, with the inequality being strict for $B_T$ taking values in
at least some interval of $\mathbb{R}$, we get
\begin{align*}
&\mathbb{E}[S^*(T)] \\
&= \frac{1}{\sqrt{2 \pi T}} \int_{\mathbb{R}} \mathcal{C}(B_T -\sigma T)
\exp \left[ \left( \mu -\frac{\sigma^2}{2} \right) T + \sigma B_T \right] \exp \left( -\frac{B_T^2}{2T} \right)  dB_T \\ \nonumber
&< \frac{1}{\sqrt{2 \pi T}} \int_{\mathbb{R}} \mathcal{C}(B_T)
\exp \left[ \left( \mu -\frac{\sigma^2}{2} \right) T + \sigma B_T \right] \exp \left( -\frac{B_T^2}{2T} \right) \, dB_T \\ \nonumber
&= \mathbb{E}[S^-(T)].
\end{align*}
\end{proof}

This result again suggests that the Ayed-Kuo integral underestimates the expected wealth of the insider.

\section{Insider trading with full information}

We readdress now the problem in~\cite{escudero}.
Contrary to the previous situation, our trader is now an insider who fully trusts her/his information on the future price of the stock,
but s/he is not allowed to borrow any money. Then denote
\begin{eqnarray}\nonumber
d\bar{S}_0 &=& \rho \, \bar{S}_0 \, dt, \\ \nonumber
\bar{S}_0(0) &=& 1,
\end{eqnarray}
and
\begin{eqnarray}\nonumber
d\bar{S}_1 &=& \mu \, \bar{S}_1 \, dt + \sigma \, \bar{S}_1 \, dB_t, \\ \nonumber
\bar{S}_1(0) &=& 1.
\end{eqnarray}
Under these conditions the insider always bets the most profitable asset, that is we consider the initial value problems
\begin{subequations}
\begin{eqnarray}\nonumber
dS_0 &=& \rho \, S_0 \, dt, \\ \nonumber
S_0(0) &=& M \, \1_{\{\bar{S}_1(T) \le \bar{S}_0(T)\}},
\end{eqnarray}
\end{subequations}
and
\begin{eqnarray}\nonumber
d^* S_1 &=& \mu \, S_1 \, dt + \sigma \, S_1 \, d^* B_t, \\ \nonumber
S_1(0) &=& M \, \1_{\{\bar{S}_1(T) > \bar{S}_0(T)\}},
\end{eqnarray}
where $M$ is the total initial wealth of the insider trader. Since this last initial value problem is another stochastic differential equation subject to an
anticipating initial condition, it cannot be interpreted in the sense of It\^o. Both the Hitsuda-Skorokhod and Russo-Vallois senses of it were analyzed
in~\cite{escudero}, so herein, as specified in the notation of the equation, we focus on the Ayed-Kuo sense.
Given the equality of the events
$$
\{\bar{S}_1(T) > \bar{S}_0(T)\} = \{ B(T) > (\rho - \mu + \sigma^2 /2) T/\sigma \}
$$
and
$$
\{\bar{S}_1(T) \leq \bar{S}_0(T)\} = \{ B(T) \leq (\rho - \mu + \sigma^2 /2) T/\sigma \},
$$
we may rewrite this problem as
\begin{subequations}
\begin{eqnarray} \label{ak3}
d^* S_1 &=& \mu \, S_1 \, dt + \sigma \, S_1 \, d^* B_t \\ \label{ak4}
S_1(0) &=& M \1_{\{ B(T) > (\rho - \mu + \sigma^2 /2) T/\sigma \}}.
\end{eqnarray}
\end{subequations}
We conjecture that the solution of $\eqref{ak3}$ and $\eqref{ak4}$ is given by
\begin{equation}\label{ak5}
S_1(t) = M \1_{\{B(T)-\sigma t > (\rho- \mu + \sigma^2/2) T/\sigma\}} \exp\left[(\mu - \sigma^2/2)t+ \sigma B(t)\right].
\end{equation}
Our conjecture is based on the fact that for an initial condition that were a continuous function of Brownian motion the results in~\cite{hksz} would apply;
in fact, this solution would be nothing else but a particular case of~\eqref{sstar}. Although it is uncertain to us whether~\eqref{ak5} is a solution in Ayed-Kuo
sense, we may check that it is a solution to the Hitsuda-Skorokhod initial value problem:
\begin{subequations}
\begin{eqnarray}\label{hs3}
\delta S_1 &=& \mu \, S_1 \, dt + \sigma \, S_1 \, \delta B_t \\ \label{hs4}
S_1(0) &=& M \1_{\{ B(T) > (\rho - \mu + \sigma^2 /2) T/\sigma \}}.
\end{eqnarray}
\end{subequations}
Given the conjectured equivalence between the Ayed-Kuo and Hitsuda-Skorokhod integrals~\cite{kuo2014}, which was checked in a particular case
in Proposition~\ref{prophssol}, this fact further supports the present conjecture.

We begin by recalling the result in~\cite{buckdahn} on the existence and uniqueness of the solution to stochastic differential equations of the form
\begin{equation}\label{buckdahn1}
\delta Y_t = \mu_t \, Y_t \, dt + \sigma_t \, Y_t \, \delta B_t, \quad Y_0=\eta, \qquad 0 \leq t \leq T,
\end{equation}
with $\sigma_t \in L^{\infty}([0,1])$, $\mu_t \in L^{\infty}([0,1]\times \Omega)$, and $ \eta  \in L^p(\Omega)$, $p > 2$.
Then, the unique solution of~\eqref{buckdahn1} is given by the formula
\begin{equation}\label{buckdahn2}
Y_t = \eta(U_{0,t}) \, \exp \left[ \int_0^t \mu_s(U_{s,t}) \, ds \right] X_t,  \ \ \ \ \mbox{a.~s.},  \ \ \ \ 0 \leq t \leq T,
\end{equation}
with the notation
$$
X_t = \exp \left( \int_0^t \sigma_s \, \delta B_s - \frac{1}{2} \int_0^t \sigma_s^2 \, ds \right), \qquad 0 \leq t \leq T,
$$
and, for $0\leq s \leq t \leq T$, the Girsanov transformation
$$
U_{s,t} B(u) = B(u) - \int_0^u \1_{[s,t]}(r) \, \sigma_r \, dr, \ \ \ \ 0 \leq u \leq T.
$$

\begin{theorem}
The unique solution of~\eqref{hs3} and~\eqref{hs4} is~\eqref{ak5}.
\end{theorem}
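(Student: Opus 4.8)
The plan is to apply directly the Buckdahn representation formula~\eqref{buckdahn2}, since initial value problem~\eqref{hs3}-\eqref{hs4} is exactly of the form~\eqref{buckdahn1} with constant coefficients $\mu_t \equiv \mu$, $\sigma_t \equiv \sigma$ and anticipating initial datum $\eta = M\,\1_{\{ B(T) > (\rho - \mu + \sigma^2/2) T/\sigma \}}$. First I would check that the hypotheses hold: $\mu$ and $\sigma$ are deterministic constants, hence trivially belong to $L^{\infty}([0,T])$ and $L^{\infty}([0,T]\times\Omega)$, and $\eta$ is a bounded random variable, so $\eta \in L^p(\Omega)$ for every $p>2$. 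Consequently~\eqref{buckdahn1} has a unique solution given by~\eqref{buckdahn2}, and it only remains to identify the three factors appearing there.

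Second, I would evaluate the exponential factors. Because $\sigma_t\equiv\sigma$ is deterministic, the Hitsuda-Skorokhod integral $\int_0^t \sigma\,\delta B_s$ coincides with the Wiener integral $\sigma B_t$, so $X_t = \exp\!\left(\sigma B_t - \tfrac12 \sigma^2 t\right)$. Likewise, since $\mu_s \equiv \mu$ does not depend on $\omega$, the shifted integrand $\mu_s(U_{s,t})$ equals $\mu$ for every $s$, whence $\exp\!\left[\int_0^t \mu_s(U_{s,t})\,ds\right] = e^{\mu t}$.

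Third --- and this is the only step that requires some care --- I would compute $\eta(U_{0,t})$. Writing $\eta = F(B(\cdot))$ with the path functional $F(w) = M\,\1_{\{ w(T) > c \}}$ and $c := (\rho - \mu + \sigma^2/2) T/\sigma$, the quantity $\eta(U_{0,t})$ is $F$ evaluated along the shifted path $u \mapsto U_{0,t}B(u) = B(u) - \int_0^u \1_{[0,t]}(r)\,\sigma\,dr = B(u) - \sigma\,(u\wedge t)$. For $0 \le t \le T$ this gives $U_{0,t}B(T) = B(T) - \sigma t$, so that $\eta(U_{0,t}) = M\,\1_{\{ B(T) - \sigma t > c \}}$. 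The point worth emphasizing is that, although $F$ is discontinuous, Buckdahn's theorem imposes no regularity on the initial datum beyond $L^p$-integrability, so the substitution is legitimate; and since $\eta$ depends on the Brownian path only through the single value $B(T)$, the action of the shift is entirely explicit.

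Finally, multiplying the three factors yields
\[
Y_t = M\,\1_{\{ B(T) - \sigma t > c \}}\, \exp\!\left[ \left( \mu - \tfrac{\sigma^2}{2} \right) t + \sigma B_t \right],
\]
which, recalling that $c = (\rho - \mu + \sigma^2/2) T/\sigma$, is precisely~\eqref{ak5}; uniqueness is inherited from the uniqueness assertion in Buckdahn's theorem. The main (and rather mild) obstacle is to make rigorous the meaning of the path substitution $\eta \mapsto \eta(U_{0,t})$ for an indicator functional and to confirm that no hidden continuity requirement is concealed in the statement of~\eqref{buckdahn1}-\eqref{buckdahn2}.
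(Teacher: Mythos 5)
Your proposal is correct and follows essentially the same route as the paper: both invoke Buckdahn's existence-and-uniqueness theorem for the linear Hitsuda-Skorokhod equation with constant coefficients, verify that $\eta$ is bounded and hence in $L^p(\Omega)$, and identify the three factors in the representation formula~\eqref{buckdahn2}. Your explicit computation of the shifted datum, $\eta(U_{0,t}) = M\,\1_{\{B(T)-\sigma t > (\rho-\mu+\sigma^2/2)T/\sigma\}}$, is the correct one (the paper's displayed intermediate formula writes $B(T)-t$ instead of $B(T)-\sigma t$, an evident typo), and your remark that Buckdahn's theorem requires no regularity of $\eta$ beyond integrability is exactly the point that legitimizes the substitution for an indicator functional.
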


\begin{proof}
The statement is a particular case of the existence and uniqueness result in~\cite{buckdahn}. Indeed, taking $\sigma_s=\sigma$, $\mu_s=\mu$ as two constant processes and $\eta=M \1_{\{B(T)> (\rho - \mu + \sigma^2/2)T/\sigma\}}$,
the different factors in $\eqref{buckdahn2}$ become
\begin{eqnarray}\nonumber
e^{\mu t} &=& \exp\left[ \int_0^t b_s(U_{s,t}) ds \right], \\ \nonumber
X_t &=& \exp\left[\sigma B(t) - \sigma^2 t /2 \right], \\ \nonumber
\eta(U_{0,t}) &=& M \1_{\{B(T)-t> (\rho - \mu + \sigma^2/2)T/\sigma\}}.
\end{eqnarray}
Since
$$
\mathbb{E} \left( \left|\eta\right|^p \right) = \mathbb{E} (\eta) = P\left[ B(T) > (\rho - \mu + \sigma^2/2)T/\sigma \right] < \infty,
$$
the result follows.
\end{proof}

\section{Conclusions}

In this work we have considered a simplified model of insider trading that allows to directly compare the financial significance of three
different anticipating integrals: the Hitsuda-Skorokhod, the Russo-Vallois, and the Ayed-Kuo stochastic integrals.
In particular, we have established the string of relations:
$$
\mathbb{E}\left[S^{\text{(HS)}}(T)\right] = \mathbb{E}\left[S^{\text{(AK)}}(T)\right] <
\mathbb{E}\left[S^{\text{(I)}}(T)\right] < \mathbb{E}\left[S^{\text{(RV)}}(T)\right],
$$
where $S^{\text{(HS)}}(T)$, $S^{\text{(RV)}}(T)$, and $S^{\text{(AK)}}(T)$ denote respectively the wealth of a Hitsuda-Skorokhod, Russo-Vallois, and
Ayed-Kuo insider at some future time $T$, while $S^{\text{(I)}}(T)$ denotes the corresponding wealth of an honest trader.
This result shows that, contrary to the Russo-Vallois forward integral, the Hitsuda-Skorokhod and the Ayed-Kuo integrals are not well-suited to
model (this sort of) insider trading in finance, at least in our limited setting. This is so to the extent that, while on one hand it always holds that
$$
0 < \mathbb{E}\left[S^{\text{(I)}}(T)\right] < \mathbb{E}\left[S^{\text{(RV)}}(T)\right],
$$
on the other hand
$\mathbb{E}\left[S^{\text{(HS)}}(T)\right]$ and $\mathbb{E}\left[S^{\text{(AK)}}(T)\right]$ may even become negative
(what represents the development of a debt) depending on the parameter values. This result is not present in~\cite{escudero} and it is due to
the possibility of borrowing money that the insider is allowed for in the present work but not in the previous one.
This difference nevertheless highlights the qualitative agreement between the results in the present work and those in~\cite{escudero}.
We recall that the analysis of the Ayed-Kuo integral is new and that it agrees with the Hitsuda-Skorohod integral in this setting, what
leaves the Russo-Vallois forward integral, among these three, as the natural candidate to model insider trading within the formalism
of anticipating stochastic calculus, at least under our present hypotheses. At this point, we would like to highlight that our analysis follows from rather
restrictive assumptions and that it would be of interest to extend the present comparison to more general financial scenarios in order to confirm/correct/broaden these conclusions.

Regarding the {\it noise interpretation dilemma} alluded to in the Introduction, we may highlight an issue that is absent, to the best of our knowledge,
in the classical It\^o vs. Stratonovich problem, but rather requires anticipating stochastic calculus. The solution $\eqref{ak5}$
to the Hitsuda-Skorokhod problem~$\eqref{hs3}$-$\eqref{hs4}$ has discontinuous sample paths with finite probability;
for another example of this see~\cite{buckdahn}.
However, model~$\eqref{hs3}$ and $\eqref{hs4}$ interpreted in the Russo-Valois sense, that is
\begin{subequations}
\begin{eqnarray}\nonumber
d^- S_1 &=& \mu \, S_1 \, dt + \sigma \, S_1 \, d^- B_t \\ \nonumber
S_1(0) &=& M \1_{\{ B(T) > (\rho - \mu + \sigma^2 /2) T/\sigma \}},
\end{eqnarray}
\end{subequations}
has a solution~\cite{escudero}
\begin{equation}\nonumber
S_1(t) = M \1_{\{B(T) > (\rho- \mu + \sigma^2/2) T/\sigma\}} \exp\left[(\mu - \sigma^2/2)t+ \sigma B(t)\right]
\end{equation}
with continuous sample paths almost surely. In other words, noise interpretation in the anticipating setting
can change the continuity properties of the sample paths of the solution. Proving our conjecture on formula~\eqref{ak5} being the
solution to problem~\eqref{ak3}-\eqref{ak4} would show that changing the interpretation from the Russo-Vallois to the Ayed-Kuo one
changes an almost surely $t-$continuous solution to a solution that is $t-$discontinuous with finite probability too.

The mentioned conjecture is the missing step in the triple comparison between these anticipating stochastic integrals carried out
in~\cite{escudero} and herein. Proving or disproving this conjecture is anyway a first step towards the understanding of noise
interpretation in anticipating stochastic calculus. Its role in different financial problems as well as other fields of knowledge
such as physics or chemistry is still to be explored.

%\par\bigskip\noindent
%{\bf Acknowledgment.} This work has been partially supported by the Government of Spain (Ministry of Economy, Industry, and Competitiveness) through Project %MTM2015-72907-EXP.

\bibliographystyle{amsplain}

\end{document}